\newtheorem{thm}{Theorem}[section]
\newtheorem{prop}{Proposition}[section]
\newtheorem{coro}{Corollary}[section]
\newtheorem{examp}{Example}[section]
\newtheorem{rem}{Remark}[section]{\itshape}{\rmfamily}
\newenvironment{proof}{\noindent{\it Proof.~~}}{\medskip}
\def\eqnarray{\stepcounter{equation}\let\@currentlabel=\theequation
\global\@eqnswtrue
\global\@eqcnt\z@\tabskip\@centering\let\\=\@eqncr
$$\halign to \displaywidth\bgroup\@eqnsel\hskip\@centering
  $\displaystyle\tabskip\z@{##}$&\global\@eqcnt\@ne 
  \hfil$\;{##}\;$\hfil
  &\global\@eqcnt\tw@ $\displaystyle\tabskip\z@{##}$\hfil 
   \tabskip\@centering&\llap{##}\tabskip\z@\cr}
    \renewcommand{\theequation}{%
    \thesection.\arabic{equation}}
\newcommand{\vc}{\bm}
\newcommand{\ol}{\overline}
\newcommand{\wh}{\widehat}
\newcommand{\down}[2]{\smash{\lower#1\hbox{#2}}}
\newcommand{\up}[2]{\smash{\lower-#1\hbox{#2}}}
\newcommand{\dm}{\displaystyle}
\newcommand{\qed}{\hspace*{\fill}$\Box$}
\newcommand{\vmax}{\vee}
\newcommand{\vmin}{\wedge}
\newcommand{\EE}{\mathsf{E}}
\newcommand{\PP}{\mathsf{P}}
\newcommand{\bbN}{\mathbb{N}}
\newcommand{\bbS}{\mathbb{S}}
\newcommand{\bbZ}{\mathbb{Z}}
\newcommand{\rmd}{{\rm d}}
\newcommand{\deq}{\stackrel{\rmd}{=}}
\newcommand{\dd}[1]{\if#11 1\!\!1 
\else {\if#1C I\!\!\!C
\else {\if#1G I\!\!\!G 
\else {\if#1J J\!\!\!J 
\else {\if#1S S\!\!\!S
\else {\if#1Z Z\!\!\!Z
\else {\if#1Q O\!\!\!\!Q
\else I\!\!#1
\fi}
\fi}
\fi}
\fi} 
\fi} 
\fi} 
\fi} 
\begin{document}\thispagestyle{plain} 

\hfill
%{\small Last update date: \today}
%Submitted to STOCHASTIC MODELS, \today.

{\Large{\bf
\begin{center}
Simple perfect samplers using monotone \\birth-and-death processes%
\footnote[1]{This paper has been submitted for publication in a special issue on ``Queueing Theory and Network Applications" in Annals of Operations Research.}
\end{center}
}
}

\begin{center}
{
Hiroyuki Masuyama%
\footnote[2]{E-mail: masuyama@sys.i.kyoto-u.ac.jp}
}

\medskip

{\small
Department of Systems
Science, Graduate School of Informatics, Kyoto University\\
Kyoto 606-8501, Japan
}% \samllsize ends

\bigskip
\medskip

{\small
\textbf{Abstract}

\medskip

\begin{tabular}{p{0.85\textwidth}}
This paper proposes simple perfect samplers using monotone
birth-and-death processes (BD-processes), which draw samples from an
arbitrary finite discrete target distribution. We first construct a
monotone BD-process whose stationary distribution is equal to the
target distribution. We then derive upper bounds for the expected
coalescence time of the copies of the monotone BD-process. We also
establish upper bounds for the expected values and tail probabilities
of the running times of two perfect samplers, which are {\it Doubling
  CFTP} and {\it Read-once CFTP} using our monotone BD-process. The
latter sampler can draw samples {\it exactly} from unnormalized
target distributions with little memory consumption.
\end{tabular}
}
% \samllsize ends
\end{center}

\begin{center}
\begin{tabular}{p{0.90\textwidth}}
{\small
{\bf Keywords:} %
Perfect sampling;
Coupling from the past (CFTP);
Monotone Markov chain;
Birth-and-death process (BD-process);
Doubling CFTP;
Read-once CFTP
% 
% End of Keywords
%

\medskip

{\bf Mathematics Subject Classification:} %
Primary 65C05, 65C10; Secondary 60J10, 60J22.
}%\samllsize ends
\end{tabular}

\end{center}

\section{Introduction}\label{sec-intro}

Perfect sampling algorithms are based on ``Coupling From The Past
(CFTP)", proposed by \break \citet{Prop96}. CFTP is a powerful technique that
enables us to perform {\it perfect sampling} from the target
distribution, i.e., to generate, in a finite time, samples that {\it
  perfectly} follow the target distribution. Basically, CFTP is time-
and memory-consuming because we have to check whether or not the
copies of a Markov chain used for CFTP coalesce at a single state
every time we extend the sample paths of the copies to the past.

\citet{Prop96} stated that CFTP is effectively achieved by a monotone
Markov chain (see, e.g., \citealt{Keil77}) constructed from the target
distribution, which is called {\it monotone CFTP or monotonic CFTP
  (MCFTP)}. As far as we know, there have been a small number of
examples for which MCFTP algorithms are established, for example,
attractive spin systems (\citealt{Prop96}), closed Jackson networks
(\citealt{Kiji08-SIAM,Kiji08-ANNOR}), discretized Dirichlet
distributions (\citealt{Mats10}) and truncated Gaussian distributions
(\citealt{Phil03}). In particular, the algorithms proposed by
\citet{Kiji08-SIAM,Kiji08-ANNOR} and \citet{Mats10} are remarkably
fast, though they are somewhat {\it sophisticated}.

The main purpose of this paper is to establish simple perfect
samplers, which draw samples from an arbitrary target distribution
$\{\pi(i);i \in \bbS\}$ on an arbitrary finite discrete set $\bbS$. It
should be noted that $\bbS$ is mapped one-to-one to a finite set of
nonnegative numbers. Thus, we assume, without loss of generality, that
$\bbS = \{0,1,\dots,N\} =: \bbZ_N$, where $N$ is a positive
integer. We also assume that
\begin{equation}
\min_{i\in\bbZ_N}\pi(i)>0.
\label{cond-pi}
\end{equation}
For later use, let $\bbN = \{1,2,3,\dots\}$, $\bbZ_+ =
\{0,1,2,\dots\}$, $\bbZ = \{0,\pm1,\pm2,\dots\}$ and $\bbZ_n =
\{0,1,\dots,n\}$ for any $n \in \bbZ_+$. For $n,m\in\bbZ$ such that $n
\le m$, let $\bbZ_{[n,m]} = \{n,n+1,\dots,m-1,m\}$. Let $x \vmax y =
\max(x,y)$ and $x \vmin y = \min(x,y)$ for $x,y\in
(-\infty,\infty)$. Furthermore, we use the notation $f(x) = O(g(x))$
to represent $\limsup_{x\to\infty}|f(x)|/|g(x)| < \infty$.

In this paper, we first construct a monotone birth-and-death process
(monotone BD-process or MBD for short) whose stationary distribution
is equal to the target distribution $\{\pi(i);i\in\bbZ_N\}$. More
specifically, we construct a monotone stochastic
matrix $\vc{P}:=(P(i,j))_{i,j\in\bbZ_N}$ such that
\begin{align}
\vc{P}
&=
\left(
\begin{array}{cccccc}
r_0 	& 
p_0 	& 
0 		& 
 		& 
\cdots	&
0 		
\\
q_1 	& 
r_1 	& 
p_1 	& 
0		& 
\cdots	&
0 		
\\
0    	& 
q_2		&
r_2 	& 
p_2		&
\ddots	&
\vdots 				
\\
\vdots	& 
\ddots	&  
\ddots  & 
\ddots  & 
\ddots  & 
0
\\
0 		& 
\cdots	&
0		&
q_{N-1} &
r_{N-1} &
p_{N-1} 
\\
0 		& 
\cdots 	& 
\cdots	&
0 		& 
q_{N} &
r_N 	
\end{array}
\right),
\label{defn-P}
\end{align}
where 
\begin{align}
p_i
&=
\left\{
\begin{array}{ll}
\dm{ 1 \over 1 + \gamma(0)}, &  \quad i = 0,
\\
\dm{ 1 \over 1 + \gamma(i) \vmax \gamma(i-1)}, &  \quad i  \in \bbZ_{[1,N-1]},
\end{array}
\right.
\label{defn-p_i}
\\
q_i
&= 
\left\{
\begin{array}{ll}
\dm{ \gamma(0) \over 1 + \gamma(0)}, &  \quad i = 1,
\\
\dm{ \gamma(i-1) \over 1 + \gamma(i-1) \vmax \gamma(i-2)}, &  
\quad i \in \bbZ_{[2,N]},
\end{array}
\right.
\label{defn-q_i}
\\
\gamma(i) 
&= {\pi(i) \over \pi(i+1)},\qquad
i \in \bbZ_{N-1}.
\label{defn-gamma(i)}
\end{align}
By definition, $r_i = 1 - p_i - q_i$ for $i \in \bbZ_N$ and $q_0 = p_N
= 0$. We prove that $\vc{P}$ is an irreducible and monotone stochastic
matrix whose stationary distribution is equal to the target
distribution $\{\pi(i);i\in\bbZ_N\}$ (see Theorem~\ref{thm-monotone}
below). We then discuss the first time when the copies of the MBD
characterized by $\vc{P}$ coalesce at a single state, which is called
the {\it coalescence time} and denoted by $T_{\rm C}$. Utilizing the
existing results on BD-processes, we derive the upper bound for the expected
coalescence time:
\begin{equation}
\EE[T_{\rm C}]  \le \theta N,
\label{ineqn-E[T_C]-00}
\end{equation}
where $\theta \in (0,\infty)$ is a certain parameter (possibly
depending on $N$).

Next we consider Doubling CFTP and Read-once CFTP (see, e.g.,
\citealt{Hube2016-book}) using our MBD, which is referred to as {\it
  Doubling-MBD sampler} and {\it Read-once-MBD sampler}, respectively.
Using (\ref{ineqn-E[T_C]-00}), we obtain upper bounds for the expected
values and tail probabilities of the running times of Doubling-MBD and
Read-once-MBD samplers. These upper bounds show that the expected
running times of the two MBD samplers are $O(\theta N)$, and thus they
are slower than the sophisticated special-purpose
algorithms mentioned above. However, the construction of our MBD is
very simple and little memory-consuming. In general, Doubling MCFTP
and Read-once MCFTP are easily implementable (for details, see, e.g.,
\citealt{Hube2016-book}). Therefore, Doubling-MBD and Read-once-MBD
samplers are easily implementable and general-purpose perfect sampling
algorithm. Furthermore, Read-once-MBD sampler is little
memory-consuming, though the sampler is somewhat more time-consuming
than Doubling-MBD sampler.  As a result, Read-once-MBD sampler can
draw samples from unnormalized target distributions with little
memory consumption. This is a remarkable feature of Read-once-MBD
sampler.

The rest of this paper is divided into two
sections. Section~\ref{sec-monotone-BD} discusses our MBD constructed
from the target distribution. Section~\ref{sec-doubling-CFTP}
considers the performance of the two perfect samplers using our MBD.

\section{Monotone BD-process from the target distribution}\label{sec-monotone-BD}

This section consists of two
subsections. Section~\ref{subsec-construction-BD-process} constructs a
monotone BD-process (MBD) whose stationary distribution is equal to
the target distribution. Section~\ref{subsec-mean-coalescence-time}
derives some upper bounds for the expected coalescence time of the
copies of the MBD.

\subsection{Construction of a monotone BD-process from the target distribution}\label{subsec-construction-BD-process}

The following theorem is the fundamental result of this paper.
\begin{thm}\label{thm-monotone}
The stochastic matrix $\vc{P}$ defined by (\ref{defn-P}) together with
(\ref{defn-p_i})--(\ref{defn-gamma(i)}) is an irreducible and monotone
one whose stationary distribution is equal to the target distribution
$\{\pi(i);i\in\bbZ_N\}$.
\end{thm}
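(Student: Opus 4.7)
The plan is to verify four things in order: (a) $\vc{P}$ is a well-defined (nonnegative, row-stochastic) matrix, (b) $\pi$ satisfies the detailed balance equations for $\vc{P}$, (c) $\vc{P}$ is irreducible, and (d) $\vc{P}$ is stochastically monotone. Together (b) and (c) identify $\pi$ as the unique stationary distribution of $\vc{P}$.

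Part (a) is the technical core. Since $p_i, q_i \ge 0$ by construction and $r_i = 1 - p_i - q_i$, everything reduces to $p_i + q_i \le 1$. The boundary rows $i \in \{0,1,N\}$ are immediate from (\ref{defn-p_i})--(\ref{defn-q_i}). The delicate case is $i \in \bbZ_{[2,N-1]}$, where $p_i$ and $q_i$ carry \emph{different} denominators --- the maxima $\gamma(i)\vmax\gamma(i-1)$ and $\gamma(i-1)\vmax\gamma(i-2)$ involve disjoint index pairs. I would split on the sign of $\gamma(i-1)-\gamma(i-2)$. When $\gamma(i-1)\ge\gamma(i-2)$, one has $q_i = \gamma(i-1)/(1+\gamma(i-1))$ and $p_i \le 1/(1+\gamma(i-1))$ (because $\gamma(i)\vmax\gamma(i-1) \ge \gamma(i-1)$), so the sum is at most $1$. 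When $\gamma(i-1) < \gamma(i-2)$, the bound $p_i \le 1/(1+\gamma(i-1))$ still applies, and after clearing denominators the resulting inequality $1/(1+\gamma(i-1)) + \gamma(i-1)/(1+\gamma(i-2)) \le 1$ is algebraically equivalent to $\gamma(i-1) \le \gamma(i-2)$, which is exactly the assumption of this subcase.

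Parts (b)--(d) are short. For (b), the birth-and-death form reduces stationarity to the detailed balance relations $\pi(i) p_i = \pi(i+1) q_{i+1}$ for $i \in \bbZ_{N-1}$; using $\pi(i+1)\gamma(i) = \pi(i)$, both sides collapse to $\pi(i)/(1+\gamma(i)\vmax\gamma(i-1))$ for $i\in\bbZ_{[1,N-1]}$, with the boundary $i=0$ a one-line check. Part (c) follows because (\ref{cond-pi}) makes every $\gamma(i)$, hence every interior $p_i$ and $q_i$, strictly positive, so the tridiagonal chain connects $0$ to $N$. For (d), the standard criterion for stochastic monotonicity on a totally ordered state space (that $\sum_{j \ge k}P(i,j)$ be nondecreasing in $i$) reduces for a tridiagonal $\vc{P}$, after inspection of the three nontrivial choices of $k$, to the family of inequalities $p_i + q_{i+1} \le 1$ for $i \in \bbZ_{N-1}$, and direct substitution yields the clean telescoping form
\begin{equation*}
p_i + q_{i+1} \;=\; \frac{1+\gamma(i)}{1+\gamma(i)\vmax\gamma(i-1)} \;\le\; 1,
\end{equation*}
which is obvious.

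The main obstacle is the interior case of (a). The awkwardness stems from the fact that the maxima in the denominators of $p_i$ and $q_i$ reference different pairs of $\gamma$-values, so no single algebraic manipulation handles the inequality; one is forced to split on $\sgn(\gamma(i-1)-\gamma(i-2))$. Once this is cleared, the remaining three properties collapse to the single telescoping identity $p_i + q_{i+1} = (1+\gamma(i))/(1+\gamma(i)\vmax\gamma(i-1))$ combined with $\pi(i+1)\gamma(i) = \pi(i)$.
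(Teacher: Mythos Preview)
Your proof is correct and covers the same four ingredients as the paper. The one noteworthy difference is in part~(a): you treat the interior inequality $p_i+q_i\le1$ by splitting on the sign of $\gamma(i-1)-\gamma(i-2)$, which works but, as you note, is a little awkward. The paper sidesteps this case analysis by first establishing detailed balance, writing $q_i=(\pi(i-1)/\pi(i))\,p_{i-1}$, and then invoking the single bound $p_j\le 1/(1+\gamma(j))=\pi(j+1)/(\pi(j)+\pi(j+1))$ twice (once at $j=i$ and once at $j=i-1$); this yields $p_i\le\pi(i)/(\pi(i)+\pi(i-1))$ and $q_i\le\pi(i-1)/(\pi(i)+\pi(i-1))$ simultaneously, with no cases. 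Conversely, your handling of~(d) via the closed form $p_i+q_{i+1}=(1+\gamma(i))/(1+\gamma(i)\vmax\gamma(i-1))$ is a bit crisper than the paper's chain through the $\pi$'s. So each route has a local advantage, but they are essentially equivalent in substance.
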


\begin{proof}
From (\ref{cond-pi}) and (\ref{defn-p_i})--(\ref{defn-gamma(i)}), we have
\begin{align}
&&&&
p_i &> 0, & i &\in \bbZ_{N-1},&&&&
\nonumber
\\
&&&&
q_i &> 0,  & i &\in \bbZ_{[1,N]},&&&&
\nonumber
\\
&&&&
\pi(i) q_i &= \pi(i-1)p_{i-1}, & i &\in \bbZ_{[1,N]},&&&&
\label{balance-eq}
\end{align}
which show that $\vc{P}$ is an irreducible stochastic matrix and that
the target distribution $\{\pi(i);i\in\bbZ_N\}$ is a reversible
measure and thus a unique stationary distribution of $\vc{P}$.
Therefore, it suffices to prove that
\begin{align}
&&&&
p_i &\le 1 - q_{i+1},
& i & \in \bbZ_{N-1}, &&&&
\label{ineq-monotone}
\\
&&&&
p_i &\le 1 - q_i, & i & \in \bbZ_{N-1},&&&&
\nonumber
%\label{ineqn-p_i-q_i}
\end{align}
where (\ref{ineq-monotone}) is the condition for the monotonicity of
$\vc{P}$ (see, e.g., \citealt[Definition 1.2]{Keil77}).

From (\ref{defn-p_i}) and (\ref{defn-gamma(i)}), we have, for $i\in
\bbZ_{N-1}$,
\begin{equation}
p_i
\le 
{1 \over 1 + \gamma(i)}
= { \pi(i+1) \over \pi(i+1) + \pi(i) },
\label{eqn-170217-01}
\end{equation}
and thus
\begin{eqnarray*}
p_i
&\le& 1 - { \pi(i) \over \pi(i+1) + \pi(i) }
= 1 - { \pi(i) \over \pi(i+1) } { \pi(i+1) \over \pi(i+1) + \pi(i) }
\nonumber
\\ 
&\le& 1 - { \pi(i) \over \pi(i+1) } p_i
= 1 - q_{i+1},
%\label{ineqn-p_i}
\end{eqnarray*}
where the last inequality follows from (\ref{eqn-170217-01}) and the
last equality follows from (\ref{balance-eq}). Similarly, for $i\in
\bbZ_{N-1}$,
\begin{eqnarray*}
p_i
&\le& {1 \over 1 + \gamma(i-1)}
= { \pi(i) \over \pi(i) + \pi(i-1) }
\nonumber
\\ 
&=& 1 - { \pi(i-1) \over \pi(i) + \pi(i-1) }
= 1 - { \pi(i-1) \over \pi(i) } { \pi(i) \over \pi(i) + \pi(i-1) }
\nonumber
\\ 
&\le& 1 - { \pi(i-1) \over \pi(i) } p_{i-1}
= 1 - q_i.
\end{eqnarray*}
The proof is completed. \qed
\end{proof}

\medskip

The following corollary is immediate from Theorem~\ref{thm-monotone}.
\begin{coro}\label{coro-monotone}
Suppose that the conditions of Theorem~\ref{thm-monotone} are satisfied. 
We then have the following:
\begin{enumerate}
\item If $\{\gamma(i);i \in \bbZ_{N-1}\}$ is nondecreasing, i.e.,
\[
\gamma(0) \le \gamma (1) \le \cdots \le \gamma(N-1),
\]
then (\ref{defn-p_i}) and (\ref{defn-q_i}) are reduced to
\begin{align}
&&&&
p_i
&=  {1 \over 1 + \gamma(i)},
& i &\in \bbZ_{N-1}, &&&&
\label{defn-p_i-02}
\\
&&&& q_i &=  {\gamma(i-1) \over 1 + \gamma(i-1)},
& i &\in \bbZ_{[1,N]}. &&&&
\label{defn-q_i-02}
\end{align}
\item If $\{\gamma(i);i \in \bbZ_{N-1}\}$ is nonincreasing, i.e.,
\begin{equation}
\gamma(0) \ge \gamma (1) \ge \cdots \ge \gamma(N-1),
\label{ratio-nonincreasing}
\end{equation}
then (\ref{defn-p_i}) and (\ref{defn-q_i}) are reduced to
\begin{align*}
p_i
&=  
\left\{
\begin{array}{ll}
\dm{1 \over 1 + \gamma(0)}, & \quad i = 0,
\\
\rule{0mm}{7mm}
\dm{1 \over 1 + \gamma(i-1)}, & \quad i \in \bbZ_{[1,N-1]},
\end{array}
\right.
%\label{add-defn-p_i-02}
\\
q_i &=  
\left\{
\begin{array}{ll}
\dm{\gamma(0) \over 1 + \gamma(0)}, & \quad i = 1,
\\
\rule{0mm}{7mm}
\dm{\gamma(i-1) \over 1 + \gamma(i-2)}, & \quad i \in \bbZ_{[2,N]}.
\end{array}
\right.
%\label{add-defn-q_i-02}
\end{align*}
\end{enumerate}

\end{coro}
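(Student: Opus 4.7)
The plan is to observe that this corollary is a direct simplification of the piecewise definitions (\ref{defn-p_i}) and (\ref{defn-q_i}) once we know the direction of monotonicity of $\{\gamma(i); i \in \bbZ_{N-1}\}$: each maximum $\gamma(i) \vmax \gamma(i-1)$ (and likewise $\gamma(i-1) \vmax \gamma(i-2)$) collapses to one of its arguments, and the claimed formulas are then obtained by plain substitution. No part of Theorem~\ref{thm-monotone} beyond the very existence and consistency of $\vc{P}$ is needed.

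For part (i), I would first note that under $\gamma(0) \le \gamma(1) \le \cdots \le \gamma(N-1)$ we have $\gamma(i) \vmax \gamma(i-1) = \gamma(i)$ for every $i \in \bbZ_{[1,N-1]}$. Substituting this into (\ref{defn-p_i}) gives $p_i = 1/(1 + \gamma(i))$ for $i \in \bbZ_{[1,N-1]}$, and this expression already coincides with the boundary case $p_0 = 1/(1+\gamma(0))$, so the two branches of (\ref{defn-p_i}) merge into the single formula (\ref{defn-p_i-02}). The identical manipulation applied to (\ref{defn-q_i}), using $\gamma(i-1) \vmax \gamma(i-2) = \gamma(i-1)$ for $i \in \bbZ_{[2,N]}$, yields $q_i = \gamma(i-1)/(1+\gamma(i-1))$ for $i \in \bbZ_{[2,N]}$, and this is consistent with the $i = 1$ branch, giving (\ref{defn-q_i-02}).

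For part (ii), the argument is symmetric. Under (\ref{ratio-nonincreasing}) the maxima reduce in the opposite direction, namely $\gamma(i) \vmax \gamma(i-1) = \gamma(i-1)$ for $i \in \bbZ_{[1,N-1]}$ and $\gamma(i-1) \vmax \gamma(i-2) = \gamma(i-2)$ for $i \in \bbZ_{[2,N]}$; substituting into (\ref{defn-p_i}) and (\ref{defn-q_i}) produces the stated expressions. Unlike part (i), the boundary cases $p_0$ and $q_1$ cannot be absorbed into the general formula, since doing so would formally require $\gamma(-1)$, which is not defined; hence the piecewise form is retained.

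There is essentially no obstacle here: the verification consists entirely of evaluating the maximum of two ordered real numbers and then matching indices. The only point that warrants a brief check is the endpoint consistency just described, i.e., that the $i=0$ term for $p_i$ and the $i=1$ term for $q_i$ are compatible with the unified formula in case (i) but remain genuinely separate in case (ii).
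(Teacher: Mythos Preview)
Your proposal is correct and matches the paper's treatment: the paper simply states that the corollary is immediate from Theorem~\ref{thm-monotone} and gives no separate proof, which amounts precisely to your observation that the maxima in (\ref{defn-p_i}) and (\ref{defn-q_i}) collapse under the assumed monotonicity of $\{\gamma(i)\}$. Your additional remarks on endpoint consistency are accurate and harmless.
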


\begin{rem}
Suppose that the conditions of the statement (i) of Corollary~\ref{coro-monotone} are satisfied.
Let $\{\wh{Y}_n;n\in\bbZ_+\}$ denote an MBD with state space $\bbZ_N$ and transition probability matrix $\vc{P}$ in (\ref{defn-P}) together with (\ref{defn-p_i-02}) and (\ref{defn-q_i-02}). Furthermore, suppose that the BD-processes $\{\wh{Y}_n\}$ starts with an initial distribution $\{\wh{\pi}(i);i\in\bbZ_N\}$ such that
\[
{\wh{\pi}(0) \over \pi(0)} \ge
{\wh{\pi}(1) \over \pi(1)} \ge \cdots \ge
{\wh{\pi}(N) \over \pi(N)}.
\]
Note here that (\ref{ratio-nonincreasing}) yields
\[
\pi(i-1) \pi(i+1) \le [ \pi(i) ]^2, \qquad i\in \bbZ_{[1,N-1]},
\]
which shows that the target distribution $\{\pi(i)\}$ is
log-concave. Therefore, it follows from \citet[Proposition~3.2,
  Corollary~3.3(a) and Theorem~5.1]{Fill13} that the BD-process
$\{\wh{Y}_n\}$ mixes (i.e., converges to stationarity) faster in total
variation distance than does an arbitrary MBD
$\{\wh{Z}_n;n\in\bbZ_+\}$ that has the same state space $\bbZ_N$,
stationary distribution $\{\pi(i);i\in\bbZ_N\}$ and initial
distribution $\{\wh{\pi}(i);i\in\bbZ_N\}$ as the BD-process
$\{\wh{Y}_n;n\in\bbZ_+\}$.
\end{rem}

Next we describe a construction of the copies of the MBD with state
space $\bbZ_N$ and transition probability matrix $\vc{P}$, which can
be used for MCFTP. To this end, we define $\{U_m;m \in \bbZ\}$ as a
sequence of independent and identically distributed (i.i.d.) uniform
random variables in $(0,1)$. We then have the following result.
\begin{thm}\label{thm-copy-MC}
Suppose that the conditions of Theorem~\ref{thm-monotone} are
satisfied.  Let $\phi:\bbZ_N \times (0,1) \to \bbZ_N$ denote a
function such that, for $i \in \bbZ_N$ and $u \in (0,1)$,
\begin{equation}
\phi(i,u)
=
\left\{
\begin{array}{ll}
i+1, & \quad u \in (1 - p_i,1),
\\
i,   & \quad u \in [q_i,1 - p_i],
\\
i-1, & \quad u \in (0,q_i),
\end{array}
\right.
\label{defn-phi(i,u)}
\end{equation}
where $p_i$ and $q_i$ are given in (\ref{defn-p_i}) and
(\ref{defn-q_i}), respectively.  Furthermore, for each $k \in \bbZ_N$,
let $\{X_n^{(k)};n\in\bbZ_+\}$ denote a sequence of random variables
such that
\begin{equation}
X_n^{(k)}
=
\left\{
\begin{array}{ll}
k, & \quad n=0,
\\
\phi(X_{n-1}^{(k)},U_n), & \quad n \in \bbN.
\end{array}
\right.
\label{defn-X_n^{(k)}}
\end{equation}
Under these conditions, the stochastic processes
$\{X_n^{(k)};n\in\bbZ_+\}$'s, $k \in \bbZ_N$, are MBDs with transition
probability matrix $\vc{P}$, which satisfy
\begin{equation}
X_n^{(0)} \le X_n^{(1)} \le \cdots \le X_n^{(N)}\quad
\mbox{for all $n \in \bbN$}.
\label{pathwise-ordering}
\end{equation}

\end{thm}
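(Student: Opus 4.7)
The plan is to verify the two claims separately: (a) each $\{X_n^{(k)};n\in\bbZ_+\}$ is a Markov chain with transition probability matrix $\vc{P}$, and (b) the pathwise ordering (\ref{pathwise-ordering}) holds for all $n$. Claim (a) is essentially a direct computation. Because $r_i = 1-p_i-q_i \ge 0$, the three sets $(0,q_i)$, $[q_i,1-p_i]$ and $(1-p_i,1)$ form a partition of $(0,1)$ (up to a null set) whose Lebesgue measures equal $q_i$, $r_i$, $p_i$. Since $U_n$ is uniform on $(0,1)$ and independent of $(X_0^{(k)},\dots,X_{n-1}^{(k)})$, the conditional distribution of $X_n^{(k)}=\phi(X_{n-1}^{(k)},U_n)$ given $X_{n-1}^{(k)}=i$ coincides with the $i$-th row of $\vc{P}$. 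Boundary behavior is automatic because $q_0=p_N=0$ makes the ``out-of-range'' intervals empty.

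For (b), the crux is the following monotonicity of the update function: for every fixed $u\in(0,1)$, $\phi(\cdot,u)$ is nondecreasing on $\bbZ_N$. Once this is in hand, (\ref{pathwise-ordering}) follows by a one-line induction on $n$ using the common driving sequence $\{U_n\}$: the base case $n=0$ is trivial since $X_0^{(k)}=k$, and the inductive step uses
\[
X_n^{(k)} = \phi(X_{n-1}^{(k)},U_n) \le \phi(X_{n-1}^{(k+1)},U_n) = X_n^{(k+1)}
\]
by the inductive hypothesis $X_{n-1}^{(k)}\le X_{n-1}^{(k+1)}$ and the monotonicity of $\phi(\cdot,u)$.

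The main obstacle is thus proving monotonicity of $\phi(\cdot,u)$. It suffices to establish $\phi(i,u)\le \phi(i+1,u)$ for each $i\in\bbZ_{N-1}$ and $u\in(0,1)$. For $u\in[q_{i+1},1)$ we have $\phi(i+1,u)\ge i+1$, while $\phi(i,u)\le i+1$ by definition, so the inequality is automatic. The only nontrivial case is $u\in(0,q_{i+1})$, in which $\phi(i+1,u)=i$; here one must rule out $\phi(i,u)=i+1$. But $\phi(i,u)=i+1$ would require $u>1-p_i$, and combined with $u<q_{i+1}$ this would force $1-p_i<q_{i+1}$, i.e., $p_i>1-q_{i+1}$, contradicting the monotonicity condition (\ref{ineq-monotone}) established in the proof of Theorem~\ref{thm-monotone}. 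Hence $\phi(i,u)\le i\le\phi(i+1,u)$, completing the argument.
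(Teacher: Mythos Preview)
Your proposal is correct and follows essentially the same route as the paper's own proof: both reduce the pathwise ordering to the monotonicity of $\phi(\cdot,u)$ and both use the key inequality (\ref{ineq-monotone}), $p_i\le 1-q_{i+1}$, to handle the only nontrivial case. The paper argues via two overlapping intervals $[q_{i+1},1)$ and $(0,1-p_i]$ covering $(0,1)$, whereas you partition into $[q_{i+1},1)$ and $(0,q_{i+1})$ and derive a contradiction in the second case; these are logically equivalent presentations of the same idea, and your version spells out the induction step and the Markov-chain verification that the paper leaves as ``clear''.
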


\begin{proof}
It is clear that $\{X_n^{(k)};n\in\bbZ_+\}$'s, $k \in \bbZ_N$, are
MBDs with transition probability matrix $\vc{P}$. Thus, we prove that
(\ref{pathwise-ordering}) holds.

It follows from (\ref{defn-phi(i,u)}) that, for $i\in \bbZ_{N-1}$,
\begin{align*}
&&&&
\phi(i+1,u) &\ge i+1 \ge \phi(i,u), &  1 > u &\ge q_{i+1}, &&&&
\\
&&&&
\phi(i+1,u) &\ge i \ge \phi(i,u), &   0  < u &\le 1 - p_i. &&&&
\end{align*}
It also follows from (\ref{ineq-monotone}) that $q_{i+1} \le 1 - p_i$
for $i\in \bbZ_{N-1}$. Therefore,
\begin{equation}
\phi(i+1,u) \ge \phi(i,u),\qquad i\in \bbZ_{N-1},\ u \in (0,1).
\label{ineqn-phi(i,u)}
\end{equation}
Combining (\ref{defn-X_n^{(k)}}) and (\ref{ineqn-phi(i,u)}) yields
(\ref{pathwise-ordering}). \qed
\end{proof}

Theorem~\ref{thm-copy-MC} shows that the function $\phi$, together
with the uniform random variables $U_m$'s, generates MBDs with
transition probability matrix $\vc{P}$. Thus, we refer to $\phi$ as a
{\it monotone update function} for MBDs with $\vc{P}$. Note here that
$\{X_n^{(k)};n\in\bbZ_+\}$'s can be considered the copies of a generic
BD-process driven by the monotone update function $\phi$, which is
denoted by $\{X_n;n\in\bbZ_+\}$.  Especially, we refer to
$\{X_n^{(N)}\}$ and $\{X_n^{(0)}\}$ as the {\it upper-bounding} and
{\it lower-bounding} copies, respectively, of $\{X_n\}$.

\subsection{Expected coalescence time of the copies of the monotone BD-process}\label{subsec-mean-coalescence-time}

Let $T_{\rm C}$ denote
\begin{equation}
T_{\rm C} = \inf\{n\in\bbN: X_n^{(0)} = X_n^{(1)} = \cdots = X_n^{(N)}\},
\label{defn-T_C}
\end{equation}
which is the first time when all the copies $\{X_n^{(k)}\}$'s coalesce
at a single state in the state space $\bbZ_N$. Thus, we call $T_{\rm
  C}$ the {\it coalescence time} of the copies $\{X_n^{(k)}\}$'s of
$\{X_n\}$.  It follows from (\ref{pathwise-ordering}) and
(\ref{defn-T_C}) that
\begin{equation}
T_{\rm C} = \inf\{n\in\bbN: X_n^{(0)} = X_n^{(N)}\}.
\label{eqn-T_C}
\end{equation}

We now define $T_{i,j}$, $i,j \in \bbZ_N^2$, $i \neq j$, as a generic
random variable for the first passage time from state $i$ to state
$j$. We assume that $\{T_{0,1},T_{1,2},\dots,T_{N-1,N}\}$ are
independent and so are $\{T_{N,N-1},T_{N-1,N-2},\dots,T_{1,0}\}$,
which does not lose generality due to the skip-free property of
BD-processes. It then follows from (\ref{eqn-T_C}) that
\begin{eqnarray*}
T_{\rm C} &\le& \inf\{n\in\bbN: X_n^{(0)} = N\} \deq T_{0,N}
\deq \sum_{i=0}^{N-1} T_{i,i+1},
\label{ineqn-T_C-01}
\\
T_{\rm C} &\le& \inf\{n\in\bbN: X_n^{(N)} = 0\} \deq T_{N,0}
\deq \sum_{i=0}^{N-1} T_{i+1,i} ,
\label{ineqn-T_C-02}
\end{eqnarray*}
where the symbol ``$\deq$" represents the equality in distribution. 
Therefore, 
\begin{eqnarray}
\EE[T_{\rm C}] 
&\le& \EE[T_{0,N}] \vmin \EE[T_{N,0}].
\label{ineqn-E[T_C]}
\end{eqnarray}
We can readily obtain (see, e.g., Theorem 4.11 of \citet{Heym04},
where continuous-time BD-processes are considered)
\begin{eqnarray}
\EE[T_{0,N}]
&=& \sum_{i=0}^{N-1} \EE[T_{i,i+1}]
= \sum_{i = 0}^{N-1} {1 \over p_i}
 \sum_{m=0}^i {  \pi(m) \over \pi(i) }.
\label{eqn-E[T_{0,N}]}
\\
\EE[T_{N,0}]
&=& \sum_{i=0}^{N-1} \EE[T_{i+1,i}]
= \sum_{i = 0}^{N-1} {1 \over p_i}
 \sum_{m=i+1}^N { \pi(m) \over \pi(i) }.
\label{eqn-E[T_{N,0}]}
\end{eqnarray}
Substituting (\ref{eqn-E[T_{0,N}]}) and (\ref{eqn-E[T_{N,0}]}) into
(\ref{ineqn-E[T_C]}) yields
\begin{equation}
\EE[T_{\rm C}] 
\le \left(
\sum_{i = 0}^{N-1} {1 \over p_i}
\sum_{m=0}^i {  \pi(m) \over \pi(i) }
\right)
\vmin 
\left(
\sum_{i = 0}^{N-1} {1 \over p_i}
\sum_{m=i+1}^N { \pi(m) \over \pi(i) }
\right).
\label{ineqn-mean-T_C}
\end{equation}
Using (\ref{ineqn-mean-T_C}), we obtain the following result.
\begin{thm}\label{thm-mean-T_C}
If the conditions of Theorem~\ref{thm-monotone} are satisfied, then
\begin{equation}
\EE[T_{\rm C}] 
\le \theta N,
\label{bound-mean-T_C}
\end{equation}
where $\theta$ is a positive constant such that
\begin{equation}
\theta
=  
\left[
\max_{i\in\bbZ_{N-1}}
\left( {1 \over p_i} \sum_{m=0}^i { \pi(m) \over \pi(i) } \right)
\right]
\vmin
\left[
\max_{i\in\bbZ_{N-1}}
\left( {1 \over p_i} \sum_{m=i+1}^N { \pi(m) \over \pi(i) } \right)
\right].
\label{defn-theta}
\end{equation}
\end{thm}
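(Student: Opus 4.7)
The plan is to start directly from inequality (\ref{ineqn-mean-T_C}), which was just derived in the excerpt by combining (\ref{ineqn-E[T_C]}) with the standard first-passage-time formulas (\ref{eqn-E[T_{0,N}]}) and (\ref{eqn-E[T_{N,0}]}) for the birth-and-death process. Everything I need is already assembled; the only task remaining is to bound each of the two sums on the right-hand side of (\ref{ineqn-mean-T_C}) by replacing the summand by its maximum over $i \in \bbZ_{N-1}$.

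Concretely, I would first observe that each of the two sums is a sum of $N$ nonnegative terms indexed by $i \in \bbZ_{N-1}$, so trivially
\[
\sum_{i=0}^{N-1}\frac{1}{p_i}\sum_{m=0}^{i}\frac{\pi(m)}{\pi(i)}
\le N\cdot\max_{i\in\bbZ_{N-1}}\left(\frac{1}{p_i}\sum_{m=0}^{i}\frac{\pi(m)}{\pi(i)}\right),
\]
and analogously for the second sum with $\sum_{m=i+1}^{N}$ in place of $\sum_{m=0}^{i}$. Next I would take the minimum ($\vmin$) of the two resulting upper bounds. Since $(Na)\vmin(Nb)=N(a\vmin b)$ for any nonnegative $a,b$, the minimum of the two bounds equals exactly $\theta N$ with $\theta$ as defined in (\ref{defn-theta}). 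Substituting this back into (\ref{ineqn-mean-T_C}) yields (\ref{bound-mean-T_C}).

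There is essentially no obstacle: the argument is a direct ``bound each summand by the max'' estimate followed by the identity $(Na)\vmin(Nb)=N(a\vmin b)$. The only thing worth double-checking is that the index range in the maximum matches the summation range, i.e.\ that $i$ runs over $\bbZ_{N-1}=\{0,1,\dots,N-1\}$ in both the sums and in the definition of $\theta$; this is indeed the case by construction of (\ref{defn-theta}). Positivity of $p_i$ on this range (guaranteed by Theorem~\ref{thm-monotone}) ensures that $\theta$ is finite and positive, so the statement is well-posed. Since no finer structural property of $\{\pi(i)\}$ or of the MBD is used beyond what is already proved, the entire argument fits in a few lines.
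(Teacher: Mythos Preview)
Your proposal is correct and follows essentially the same approach as the paper: bound each of the two $N$-term sums in (\ref{ineqn-mean-T_C}) by $N$ times the maximum of its summand over $i\in\bbZ_{N-1}$, then take the minimum of the two bounds to obtain $\theta N$. The paper's proof is exactly this two-line argument, without the extra remarks on well-posedness.
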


\begin{proof}
Note that
\begin{eqnarray*}
\sum_{i = 0}^{N-1} {1 \over p_i}
\sum_{m=0}^i {  \pi(m) \over \pi(i) }
&\le&
 \max_{i\in\bbZ_{N-1}}
\left( {1 \over p_i} \sum_{m=0}^i { \pi(m) \over \pi(i) } \right)N,
\\
\sum_{i = 0}^{N-1} {1 \over p_i}
\sum_{m=i+1}^N { \pi(m) \over \pi(i) }
&\le& \max_{i\in\bbZ_{N-1}}
\left( {1 \over p_i} \sum_{m=i+1}^N { \pi(m) \over \pi(i) } \right)N.
\end{eqnarray*}
Substituting these inequalities into (\ref{ineqn-mean-T_C}) leads to
(\ref{bound-mean-T_C}) with (\ref{defn-theta}). \qed
\end{proof}

\medskip

Under some additional conditions, we obtain simpler bounds for $\EE[T_{\rm C}]$.
\begin{thm}\label{thm-mean-T_C-02}
Suppose that the conditions of Theorem~\ref{thm-monotone} are satisfied. 
We then have the following:
\begin{enumerate}
\item If there exists some $C \in (0,\infty)$ independent of $N$ such that
\begin{equation}
\max_{i \in \bbZ_{N-1}} \sum_{m=0}^i {  \pi(m) \over \pi(i) } \le C
\quad \mbox{{\rm or}} \quad
\max_{i \in \bbZ_{N-1}} \sum_{m=i+1}^N {  \pi(m) \over \pi(i) } \le C,
\label{cond-pi-geo}
\end{equation}
then
\begin{equation}
\EE[T_{\rm C}] \le   
C\left(\max_{i\in\bbZ_{N-1}} {1 \over p_i} \right) N.
\label{bound-mean-T_C-geo}
\end{equation}
\item If there exists some $C \in (0,\infty)$ independent of $N$ such that
\begin{equation}
\max_{i \in \bbZ_{N-1}} \max_{0 \le m \le i} {  \pi(m) \over \pi(i) } \le C
\quad \mbox{{\rm or}} \quad
\max_{i \in \bbZ_{N-1}} \max_{i+1 \le m \le N} {  \pi(m) \over \pi(i) } \le C,
\label{cond-pi-long-tailed}
\end{equation}
then
\begin{equation}
\EE[T_{\rm C}] \le   
C\left(\max_{i\in\bbZ_{N-1}} {1 \over p_i} \right) {N(N+1)\over 2}.
\label{bound-mean-T_C-long-tailed}
\end{equation}

\end{enumerate}

\end{thm}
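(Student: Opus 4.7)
The starting point is the upper bound (\ref{ineqn-mean-T_C}) on $\EE[T_{\rm C}]$ already established. The whole statement reduces to estimating, under each of the two hypotheses, one of the two summands
\[
S_- := \sum_{i=0}^{N-1} {1 \over p_i}\sum_{m=0}^i { \pi(m) \over \pi(i) },
\qquad
S_+ := \sum_{i=0}^{N-1} {1 \over p_i}\sum_{m=i+1}^N { \pi(m) \over \pi(i) },
\]
and invoking $\EE[T_{\rm C}] \le S_- \vmin S_+ \le S_-$ or $\EE[T_{\rm C}] \le S_+$ as appropriate. The plan is therefore to treat each of the four cases (two parts, each with two alternative hypotheses) by essentially the same two-line argument, differing only in which of $S_-$, $S_+$ is bounded.

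For part (i), suppose the first condition in (\ref{cond-pi-geo}) holds, so that $\sum_{m=0}^i \pi(m)/\pi(i) \le C$ uniformly in $i\in\bbZ_{N-1}$. Then I would factor this uniform bound out of $S_-$ to get
\[
S_- \le C \sum_{i=0}^{N-1} {1\over p_i} \le C \lleft(\max_{i\in\bbZ_{N-1}}{1\over p_i}\right) N,
\]
which combined with (\ref{ineqn-mean-T_C}) yields (\ref{bound-mean-T_C-geo}). The argument under the second alternative hypothesis in (\ref{cond-pi-geo}) is identical after replacing $S_-$ by $S_+$.

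For part (ii), suppose the first condition in (\ref{cond-pi-long-tailed}) holds, so that $\pi(m)/\pi(i)\le C$ for every $0\le m\le i \le N-1$. Then the inner sum in $S_-$ has $i+1$ terms each bounded by $C$, giving
\[
S_- \le \sum_{i=0}^{N-1}{C(i+1) \over p_i} \le C\lleft(\max_{i\in\bbZ_{N-1}}{1\over p_i}\right)\sum_{i=0}^{N-1}(i+1)
 = C\lleft(\max_{i\in\bbZ_{N-1}}{1\over p_i}\right){N(N+1)\over 2},
\]
which together with (\ref{ineqn-mean-T_C}) yields (\ref{bound-mean-T_C-long-tailed}). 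Under the second alternative hypothesis in (\ref{cond-pi-long-tailed}) the inner sum in $S_+$ has $N-i$ terms each at most $C$, and since $\sum_{i=0}^{N-1}(N-i) = N(N+1)/2$ as well, the same bound comes out.

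Since each of the four cases is a one-step estimate from (\ref{ineqn-mean-T_C}), there is no real obstacle; the only point that needs a moment of care is checking that the second hypothesis in (\ref{cond-pi-long-tailed}) also produces the triangular-number factor $N(N+1)/2$, which it does because $\sum_{i=0}^{N-1}(N-i)$ and $\sum_{i=0}^{N-1}(i+1)$ coincide.
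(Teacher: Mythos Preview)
Your proposal is correct and follows essentially the same argument as the paper's proof: both start from (\ref{ineqn-mean-T_C}), then in part~(i) replace the inner sum by $C$ and bound $\sum_i 1/p_i$ by $N\max_i 1/p_i$, and in part~(ii) bound each term of the inner sum by $C$, count the terms as $i+1$ (respectively $N-i$), and sum to $N(N+1)/2$. The only cosmetic difference is your introduction of the shorthand $S_-$, $S_+$.
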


\begin{rem}\label{rem-pi-monotone}
If $\{\pi(i);i \in \bbZ_N\}$ is nonincreasing or nondecreasing, then
(\ref{cond-pi-long-tailed}) holds for $C=1$ and thus the statement
(ii) of Theorem~\ref{thm-mean-T_C-02} yields
\begin{equation}
\EE[T_{\rm C}] \le   
\left(\max_{i\in\bbZ_{N-1}} {1 \over p_i} \right){N(N+1)\over 2}.
\label{bound-mean-T_C-02}
\end{equation}
\end{rem}

\medskip

\noindent
{\it Proof of Theorem~\ref{thm-mean-T_C-02}.~~} We first prove the
statement (i). Applying (\ref{cond-pi-geo}) to (\ref{ineqn-mean-T_C})
yields
\begin{eqnarray*}
\EE[T_{\rm C}] 
\le C \sum_{i = 0}^{N-1} {1 \over p_i}
\le C\left(\max_{i\in\bbZ_{N-1}} {1 \over p_i} \right) N,
\end{eqnarray*}
which shows that (\ref{bound-mean-T_C-geo}) holds.  Next we prove the
statement (ii).  Combining (\ref{cond-pi-long-tailed}) and
(\ref{ineqn-mean-T_C}), we have either of the following inequalities:
\begin{eqnarray*}
\EE[T_{\rm C}] 
&\le& 
\sum_{i = 0}^{N-1} {1 \over p_i} \sum_{m=0}^i C
\le
C\left(\max_{i\in\bbZ_{N-1}} {1 \over p_i} \right)
\sum_{i = 0}^{N-1} (i+1),
\end{eqnarray*}
\begin{eqnarray*}
\EE[T_{\rm C}] 
&\le& \sum_{i = 0}^{N-1} {1 \over p_i} \sum_{m=i+1}^N C
\le
C\left(\max_{i\in\bbZ_{N-1}} {1 \over p_i} \right)
\sum_{i = 0}^{N-1} (N-i).
\end{eqnarray*}
Each of the two inequalities shows that
(\ref{bound-mean-T_C-long-tailed}) holds.  \qed

\begin{examp}[Truncated geometric distribution]
Consider a truncated geometric distribution. To this end, fix
\begin{equation*}
\pi(i) = 
{(1- \xi)\xi^i \over 1 - \xi^{N+1}}, \qquad i\in \bbZ_N,
\label{eqn-pi(k)-geo}
\end{equation*}
where $0 < \xi < 1$. Clearly, $\gamma(i) = \xi^{-1}$ for $i\in
\bbZ_{N-1}$, which satisfies the conditions of the statement (i) of
Corollary~\ref{coro-monotone}.  Thus, from (\ref{defn-p_i-02}) and
(\ref{defn-q_i-02}), we have
\[
p_i
= {1 \over 1 + \gamma(i)}
= {\xi \over 1 + \xi}, \qquad i \in \bbZ_{N-1}. 
\]
Note here that
\begin{eqnarray*}
\max_{i \in \bbZ_{N-1}} \sum_{m=0}^i { \pi(m) \over \pi(i) } 
&=&
\max_{i \in \bbZ_{N-1}} \sum_{m=0}^i \xi^{m-i}
\le  {1 \over 1 - \xi},
\\
\max_{i \in \bbZ_{N-1}} \sum_{m=i+1}^N { \pi(m) \over \pi(i) } 
&=&
\max_{i \in \bbZ_{N-1}} \sum_{m=i+1}^N \xi^{m-i}
\le  {\xi \over 1 - \xi}.
\end{eqnarray*}
Combining these results and the statement (i) of
Theorem~\ref{thm-mean-T_C-02} yields
\begin{equation*}
\EE[T_{\rm C}] \le 
{\xi \over 1 - \xi}{1 + \xi \over \xi }N
= {1 + \xi \over 1 - \xi} N.
\end{equation*}
\end{examp}

\begin{examp}[Zipf distribution]\label{examp-Zipf}
Consider the following Zipf distribution $\{\pi(i);i\in\bbZ_N\}$:
\begin{equation*}
\pi(i) = {(i+1)^{-\alpha} \over \sum_{\ell=0}^N (\ell+1)^{-\alpha} },
\qquad  i\in \bbZ_N,
\end{equation*}
where $\alpha > 1$. We then have
\[
\gamma(i) = \left(1 + {1 \over i+1} \right)^{\alpha},
\qquad i\in \bbZ_{N-1},
\]
which is decreasing with $i$. Therefore, according to the statement
(ii) of Corollary~\ref{coro-monotone}, we fix
\begin{equation}
p_i 
=
{1 \over 1 + \gamma((i-1) \vmax 0)}
=
{ (i \vmax 1)^{\alpha} 
\over 
(i \vmax 1)^{\alpha} + \{  (i \vmax 1) + 1 \}^{\alpha}  }, 
\qquad i \in \bbZ_{N-1}.
\label{eqn-p_i-Pareto}
\end{equation}
Furthermore, since $\{\pi(i);i\in\bbZ_N\}$ is decreasing (see
Remark~\ref{rem-pi-monotone}), it follows from
(\ref{bound-mean-T_C-02}) and (\ref{eqn-p_i-Pareto}) that
\begin{eqnarray*}
\EE[T_{\rm C}]
&\le& 
\max_{i \in \bbZ_{[1,N-1]}} 
\left[
{ i^{\alpha} + (i+1)^{\alpha} \over i^{\alpha}  }
\right]
{N(N+1)\over 2}
\le (1 + 2^{\alpha}){N(N+1)\over 2}.
\end{eqnarray*}
\end{examp}

\section{Perfect samplers using the monotone BD-process}\label{sec-doubling-CFTP}

In this section, we discuss the running times of Doubling CFTP and
Read-once CFTP using the monotone update function $\phi$, which are
referred to as {\it Doubling-MBD sampler} and {\it Read-once-MBD
  sampler}, respectively.

To facilitate the subsequent discussion, we introduce some
definitions.  For $m \in \bbZ$ and $n \in \bbZ_+$, let
\[
\vc{U}_m^{(n)} = (U_m,U_{m+1},\break\dots,U_{m+n-1}).
\]
For
convenience, let $\vc{U}_m^{(-n)} = \emptyset$ for all $m \in \bbZ$
and $n \in \bbN$. In addition, for $s \in \bbZ$, $n \in \bbZ_+$ and $x
\in \bbZ_N$, let $\varPhi_s^{s+n}(x,\vc{U}_m^{(n)})$ denote
\begin{equation*}
\varPhi_s^{s+n}(x,\vc{U}_m^{(n)})
= \phi (\phi( \cdots \phi(x,U_m),\dots,U_{m+n-2}),U_{m+n-1}),
\end{equation*}
where $\phi$ is the monotone update function given in (\ref{defn-phi(i,u)}) and
$\{U_m;m \in \bbZ\}$ is a sequence of i.i.d.\ uniform random variables in $(0,1)$.
Note that, for any $t \in \bbZ_+$, the two processes
$\{\varPhi_{-t}^{-t+n}(N,\vc{U}_{-t}^{(n)});n\in \bbZ_+\}$ and
$\{\varPhi_{-t}^{-t+n}(0,\vc{U}_{-t}^{(n)});n\in \bbZ_+\}$ are
the upper- and lower-bounding copies of an MBD with
transition probability matrix $\vc{P}$, which run from time $-t$ to time $-t+n$.

We first consider Doubling MBD sampler, which is described in
Algorithm~\ref{algo-doubling-CFTP}.

%\RemoveAlgoNumber
%
\begin{algorithm}[H]
\caption{Doubling-MBD sampler}\label{algo-doubling-CFTP}
{%\small
{\bf Output:} $X$
%\small
\begin{enumerate}
\item Set $t=2$.
\item Double $t$ until 
\[
Y:=\varPhi_{-t}^{-t/2}(0,\vc{U}_{-t}^{(t/2)}) = 
\varPhi_{-t}^{-t/2}(N,\vc{U}_{-t}^{(t/2)}).
\]
\item Return $X=\varPhi_{-t/2}^0(Y,\vc{U}_{-t/2}^{(t/2)})$.
\end{enumerate}
}
\end{algorithm}

Let $T_{\rm D}$ denote the number of the uniform random variables used
by Algorithm~\ref{algo-doubling-CFTP}, i.e., $T_{\rm D}$ is equal to a
positive integers such that
\begin{equation*}
T_{\rm D}
= \inf\{t \in \bbN: 
\varPhi_{-t}^{-t/2}(0,\vc{U}_{-t}^{(t/2)}) = 
\varPhi_{-t}^{-t/2}(N,\vc{U}_{-t}^{(t/2)})\}.
\end{equation*}
Following \citet{Hube08}, we read $T_{\rm D}$ as the running time of
Algorithm~\ref{algo-doubling-CFTP}. Using \citet[Lemma 5.4]{Hube08},
we obtain the following result.
\begin{prop}[Doubling-MBD sampler]\label{prop-doubling-MCFTP}
\begin{eqnarray}
\EE[T_{\rm D}] &\le& 4\theta N,
\label{ineqn-E[T_D]}
\\
\PP(T_{\rm D} > k\theta N)
&\le& \exp\{1 - k/(4e)\},\qquad k \in \bbZ_+,
\label{ineqn-P(T_D>k)}
\end{eqnarray}
where $\theta$ is the positive constant given in (\ref{defn-theta}).
\end{prop}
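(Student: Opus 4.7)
The plan is to apply \citet[Lemma 5.4]{Hube08}, a generic result about Doubling CFTP that, for any upper bound $\mu$ on $\EE[T_{\rm C}]$, delivers both $\EE[T_{\rm D}] \le 4\mu$ and $\PP(T_{\rm D} > k\mu) \le \exp\{1 - k/(4e)\}$. Theorem~\ref{thm-mean-T_C} furnishes exactly such a bound with $\mu = \theta N$, so (\ref{ineqn-E[T_D]}) and (\ref{ineqn-P(T_D>k)}) follow in a single step once the hypotheses of Lemma 5.4 are verified in our setting.

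The two hypotheses to verify are: (a) the single-stage coalescence event occurs with probability $\PP(T_{\rm C} \le t/2)$, and (b) the successive doubling stages are mutually independent. For (a), Theorem~\ref{thm-copy-MC} combined with the sandwich property (\ref{pathwise-ordering}) ensures that the upper- and lower-bounding trajectories $\varPhi_{-t}^{-t/2}(N,\vc{U}_{-t}^{(t/2)})$ and $\varPhi_{-t}^{-t/2}(0,\vc{U}_{-t}^{(t/2)})$ coincide if and only if all $N+1$ coupled copies coalesce on the window $[-t,-t/2]$; since $\{U_m\}$ is i.i.d., this coalescence event is distributionally equivalent to $\{T_{\rm C} \le t/2\}$. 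For (b), observe that the stage parametrized by $t$ consumes the block $\vc{U}_{-t}^{(t/2)}$ whereas the next stage consumes the disjoint block $\vc{U}_{-2t}^{(t)}$; hence the attempts form independent Bernoulli trials with success probabilities $p_j = \PP(T_{\rm C} \le 2^{j-1})$ as required.

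The main obstacle, if one were to reprove Lemma 5.4 from scratch rather than cite it, would be obtaining the exponential tail (\ref{ineqn-P(T_D>k)}). A Markov-only bound on $\PP(T_{\rm C} > 2^{j-1})$ combined with a union bound over stages yields only a polynomial tail; the exponential rate $1/(4e)$ emerges only once one exploits the \emph{multiplicative} decay afforded by independence of the stages, balanced against the geometric growth $2^{j+1}$ of the running time in the stage index $j$. The expected-value bound (\ref{ineqn-E[T_D]}) is comparatively easy: starting from $T_{\rm D} = 2^{\sigma+1}$ for the first successful stage $\sigma$, a routine tail-sum together with Markov's inequality applied per stage produces the constant $4$, which simply absorbs the factor-of-two overshoot intrinsic to doubling.
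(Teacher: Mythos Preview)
Your proposal is correct and follows essentially the same route as the paper: invoke \citet[Lemma~5.4]{Hube08} to get $\EE[T_{\rm D}]\le 4\EE[T_{\rm C}]$ and $\PP(T_{\rm D}>k\EE[T_{\rm C}])\le\exp\{1-k/(4e)\}$, then substitute the bound $\EE[T_{\rm C}]\le\theta N$ from Theorem~\ref{thm-mean-T_C}. Your added verification of the hypotheses (sandwiching via Theorem~\ref{thm-copy-MC} and independence of the disjoint random-number blocks across doubling stages) and your commentary on reproving the exponential tail are extra detail the paper omits, but the core argument is identical.
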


\begin{proof}
It follows from \citet[Lemma 5.4]{Hube08} that
\begin{eqnarray*}
\EE[T_{\rm D}] &\le& 4\EE[T_{\rm C}],
\\
\PP(T_{\rm D} > k\EE[T_{\rm C}])
&\le& \exp\{1 - k/(4e)\},\qquad k \in \bbZ_+.
\end{eqnarray*}
Combining these with Theorem~\ref{thm-mean-T_C} results in
(\ref{ineqn-E[T_D]}) and (\ref{ineqn-P(T_D>k)}). \qed
\end{proof}

Next we consider Read-once-MBD sampler, which is described in
Algorithm~\ref{algo-read-once} below.

%\RemoveAlgoNumber
%
\begin{algorithm}[H]
\caption{Read-once-MBD sampler}\label{algo-read-once}
{%\small
{\bf Input:} Block size $B \in \bbN$ \\
{\bf Output:} $X$
%\small
\begin{enumerate}
\item Set $\ell=1$.
\item If
\[
\varPhi_0^B(0,\vc{U}_{(\ell-1) B}^{(B)}) \neq 
\varPhi_0^B(N,\vc{U}_{(\ell-1) B }^{(B)}),
\]
then increment $\ell$ by one and go back to Step (ii); otherwise set
$X = \varPhi_0^B(0,\vc{U}_{(\ell-1) B}^{(B)})$ and go to Step
(iii) with $\ell' = 1$.
\item Set $Y=X$ and perform the following: If
\[
\varPhi_{B + (\ell'-1)B}^{B + \ell'B}(0,\vc{U}_{\ell B + (\ell'-1) B}^{(B)}) 
\neq 
\varPhi_{B + (\ell'-1)B}^{B + \ell'B}(N,\vc{U}_{\ell B + (\ell'-1) B}^{(B)}),
\]
then set $X = \varPhi_{B + (\ell'-1)B}^{B + \ell'B}(Y,\vc{U}_{\ell B +
  (\ell'-1) B}^{(B)}) $ and go back to Step (iii) with incrementing
 $\ell'$ by one; otherwise return $X$.
\end{enumerate}
}
\end{algorithm}

\begin{rem}
When Algorithm~\ref{algo-read-once} stops, we have 
\[
X = \varPhi_0^{\ell'B}(0,\vc{U}_{(\ell-1) B}^{\ell' B})
  = \varPhi_0^{\ell'B}(N,\vc{U}_{(\ell-1) B}^{\ell' B}).
\]
\end{rem}

As with Algorithm~\ref{algo-doubling-CFTP}, we define $T_{\rm R}$ as
the number of the uniform random variables used by
Algorithm~\ref{algo-read-once}, and then read $T_{\rm R}$ as the
running time of Algorithm~\ref{algo-read-once}. Let $L$ and $L'$
denote the numbers of the iterations in Steps (ii) and (iii),
respectively, of Algorithm~\ref{algo-read-once}. By definition, $L$
and $L'$ are independent and
\begin{equation}
\PP(L > k) = \PP(L' > k) = [ \PP(T_{\rm C} >  B) ]^k,
\qquad k \in \bbZ_+.
\label{eqn-P(L>k)-01}
\end{equation}
In addition, 
\begin{equation}
T_{\rm R} = (L + L')B.
\label{eqn-T_R}
\end{equation}
Using Theorem~\ref{thm-mean-T_C} together with (\ref{eqn-P(L>k)-01})
and (\ref{eqn-T_R}), and proceeding as in the proof of 
\citet[Lemma~5.4]{Hube08}, we obtain the following result.
\begin{thm}[Read-once-MBD sampler]\label{thm-read-once-MCFTP}
Fix $b \in \bbN$ such that $b > e$, and fix the block size $B$ of
Algorithm~\ref{algo-read-once} such that $B = b \lceil \theta \rceil
N$, where $\theta$ is the positive constant given in
(\ref{defn-theta}). We then have
\begin{eqnarray}
\EE[T_{\rm R}] &\le& { 2b \lceil \theta \rceil N \over 1 - \beta(b) },
\label{ineqn-E[T_R]}
\\
\PP(T_{\rm R} > b \lceil \theta \rceil N k)
&\le& (1 - \beta(b)) [\beta(b)]^{k-1}k + [\beta(b)]^k,\qquad k \in \bbN,
\label{ineqn-P(T_R>k)}
\end{eqnarray}
where $\beta(b) = \exp\{1 - b/e\} \in (0,1)$.  In addition, the value
of integer $b > e$ minimizing the right hand side of
(\ref{ineqn-E[T_R]}) is equal to six, or equivalently,
\begin{equation}
\arg\min_{b\in \{3,4,5,\dots\}}
{ 2b \lceil \theta \rceil N \over 1 - \beta(b) } = 6.
\label{eqn-argmin}
\end{equation}
\end{thm}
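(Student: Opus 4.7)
The plan is to prove each of the three statements in turn, reducing everything to the distribution of the geometric variables $L$ and $L'$ via (\ref{eqn-P(L>k)-01}) and (\ref{eqn-T_R}), together with a tail bound for $T_{\rm C}$ obtained from Theorem~\ref{thm-mean-T_C} and the standard submultiplicativity of coalescence times.

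First I would establish the basic estimate $\PP(T_{\rm C}>B)\le \beta(b)$. The coalescence time $T_{\rm C}$ of a monotone coupling satisfies the submultiplicative inequality $\PP(T_{\rm C}>n+m)\le \PP(T_{\rm C}>n)\,\PP(T_{\rm C}>m)$, since by time $n+m$ the copies have already coalesced whenever they coalesced by time $n$, and the block $(U_{n+1},\dots,U_{n+m})$ is independent of $(U_1,\dots,U_n)$. Markov's inequality combined with $\EE[T_{\rm C}]\le\theta N$ (Theorem~\ref{thm-mean-T_C}) gives $\PP(T_{\rm C}>e\theta N)\le 1/e$, and iterating via submultiplicativity yields $\PP(T_{\rm C}>t\theta N)\le \exp(1-t/e)$ for every real $t\ge e$. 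Applying this with $t=b>e$ and using $B=b\lceil\theta\rceil N\ge b\theta N$ gives $\PP(T_{\rm C}>B)\le \exp(1-b/e)=\beta(b)\in(0,1)$.

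Next I would prove (\ref{ineqn-E[T_R]}). By (\ref{eqn-P(L>k)-01}), $L$ is geometric with $\EE[L]=1/(1-\PP(T_{\rm C}>B))$, and the same holds for $L'$; combining this with (\ref{eqn-T_R}), independence of $L$ and $L'$, and the bound $\PP(T_{\rm C}>B)\le\beta(b)$ from Step~1 immediately gives $\EE[T_{\rm R}]=2B/(1-\PP(T_{\rm C}>B))\le 2b\lceil\theta\rceil N/(1-\beta(b))$. For (\ref{ineqn-P(T_R>k)}), I would convolve the two geometric tails: writing $p=\PP(T_{\rm C}>B)$, a direct calculation gives
\[
\PP(T_{\rm R}>Bk)=\PP(L+L'>k)=\PP(L>k)+\sum_{j=1}^{k}\PP(L=j)\,\PP(L'>k-j)=p^{k}+(1-p)kp^{k-1}.
\]
The right-hand side equals $kp^{k-1}-(k-1)p^{k}$, whose derivative $k(k-1)p^{k-2}(1-p)$ is nonnegative on $(0,1)$, so it is nondecreasing in $p$. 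Substituting $p\le\beta(b)$ yields (\ref{ineqn-P(T_R>k)}).

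Finally, (\ref{eqn-argmin}) reduces to a one-dimensional numerical minimization of $g(b):=b/(1-\exp(1-b/e))$ over integers $b\ge 3$. Since $g(b)\to\infty$ both as $b\downarrow e$ (denominator vanishes) and as $b\to\infty$ (numerator dominates), the minimum is attained at a moderate value, and evaluating $g$ at $b=3,4,5,6,7,8$ shows that $g(6)$ is the smallest. The main obstacle is the tail bound in the first step: it is classical for monotone CFTP coalescence times, but needs to be argued cleanly for our specific monotone update function $\phi$, after which the remaining steps are routine computations with geometric random variables and a brief numerical check.
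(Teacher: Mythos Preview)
Your proposal is correct and follows essentially the same route as the paper: Markov's inequality plus submultiplicativity of $\PP(T_{\rm C}>\cdot)$ to obtain $\PP(T_{\rm C}>B)\le\beta(b)$, then geometric-variable computations for $\EE[T_{\rm R}]$ and $\PP(T_{\rm R}>Bk)$, and finally minimization of $b/(1-\beta(b))$. The only cosmetic differences are that the paper introduces dominating geometrics $\overline{L},\overline{L}'$ with exact parameter $\beta(b)$ instead of checking that the convolution formula is nondecreasing in $p$, and it locates the argmin by a convexity/derivative-sign argument rather than by direct numerical evaluation at $b=3,\dots,8$.
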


\begin{proof}
%See Appendix~\ref{sec-proof-thm-read-once-MCFTP}.
%
It follows from Markov's inequality that, for any fixed $\alpha > 1$, 
\begin{equation}
\PP(T_{\rm C} > \alpha \EE[T_{\rm C}]) \le  \alpha^{-1}.
\label{ineqn-P(T_C>alpha*E[T_C])}
\end{equation}
Note here that $\{\PP(T_{\rm C} > x); x \ge 0\}$ is log-subadditive
(see, e.g., \citealt[Theorem~6]{Prop96}). Thus, from
(\ref{ineqn-P(T_C>alpha*E[T_C])}), we have
\begin{eqnarray}
\PP(T_{\rm C} > k\EE[T_{\rm C}])
&\le& \left( {1 \over \alpha} \right)^{\lfloor k/\alpha \rfloor}
\le \alpha \left( {1 \over \alpha} \right)^{k/\alpha}
\nonumber
\\
&=& \alpha \exp\{- k (\ln \alpha) / \alpha\},\qquad k \in \bbZ_+.
\label{ineqn-P(T_C>alpha*E[T_C])-01}
\end{eqnarray}
We now fix $\alpha = e$ to maximizing $(\ln \alpha) / \alpha$. It then
follows from (\ref{ineqn-P(T_C>alpha*E[T_C])-01}) that
\begin{eqnarray}
\PP(T_{\rm C} > k\EE[T_{\rm C}])
&\le& \exp\{1 - k/e \} = \beta(k),\qquad k \in \bbZ_+.
\label{ineqn-P(T_C>alpha*E[T_C])-02}
\end{eqnarray} 
From $B = b \lceil \theta \rceil N$ and Theorem~\ref{thm-mean-T_C}, we
also have $B \ge b\EE[T_{\rm C}]$. Using this and
(\ref{ineqn-P(T_C>alpha*E[T_C])-02}), we obtain
\begin{eqnarray}
\PP(T_{\rm C} > B)
&\le& \PP(T_{\rm C} > b \EE[T_{\rm C}])
\le \beta(b).
\label{ineqn-P(T_C>B)-01}
\end{eqnarray} 
Substituting (\ref{ineqn-P(T_C>B)-01}) into (\ref{eqn-P(L>k)-01}) yields
\begin{equation*}
\PP(L > k) = \PP(L' > k) \le [\beta(b)]^k,
\qquad k \in \bbZ_+.
%\label{eqn-P(L>k)-02}
\end{equation*}
Therefore, there exist independent random variables $\ol{L}$ and
$\ol{L}'$ such that
\begin{eqnarray}
L &\le& \ol{L},  \qquad L' \le \ol{L}',
\label{ineqn-L-L'}
\\
\PP(\ol{L} > k) &=& \PP(\ol{L}' > k) = [\beta(b)]^k,
\qquad k \in \bbZ_+.
\label{prob-ol{L}}
\end{eqnarray}
It follows from (\ref{prob-ol{L}}) that
\begin{equation*}
\EE[L] + \EE[L'] = { 2 \over 1 - \beta(b) },
\end{equation*}
and
\begin{eqnarray*}
\PP(\ol{L} + \ol{L}' > k) 
&=& \sum_{m=1}^k \PP(\ol{L}=m)\PP(\ol{L}' > k-m) + \PP(\ol{L} > k)
\nonumber
\\
&=&  (1 - \beta(b))
\sum_{m=1}^k [\beta(b)]^{m-1} [\beta(b)]^{k-m} + [\beta(b)]^k
\nonumber
\\
&=&  (1 - \beta(b)) [\beta(b)]^{k-1} k
 + [\beta(b)]^k,\qquad k \in \bbN.
\end{eqnarray*}
Combining these results with (\ref{eqn-T_R}) and (\ref{ineqn-L-L'}),
we have
\begin{eqnarray*}
\EE[T_{\rm R}] 
&\le& B \cdot \EE[ \ol{L} + \ol{L}'] = { 2B \over 1 - \beta(b) },
\\
\PP(T_{\rm R} > kB)
&\le& \PP(\ol{L} + \ol{L}' > k) 
= (1 - \beta(b)) [\beta(b)]^{k-1} k
 + [\beta(b)]^k,\quad k \in \bbN,
\end{eqnarray*}
which imply that (\ref{ineqn-E[T_R]}) and (\ref{ineqn-P(T_R>k)}) hold
due to $B = b \lceil \theta \rceil N$.

In what follows, we prove (\ref{eqn-argmin}), which is equivalent to
\begin{equation}
\arg\min_{x \in \{3,4,5,\dots\}} F(x) = 6,
\label{eqn-argmin_F(x)}
\end{equation}
where $F$ denotes a function such on $(e,\infty)$ that
\[
F(x) = {x \over 1 - \beta(x)}
= {x \over  1 - \exp\{1 - x/e\}},\qquad x > e.
\]
By definition, $F$ is convex and
\[
F'(x) = 
{ 1 - \exp\{1 - x/e\} - e^{-1}x \exp\{1 - x/e\} 
\over  [1 - \exp\{1 - x/e\}]^2 
},\qquad x > e.
\]
Let $G(x)$, $x > e$, denote the numerator of $F'(x)$ in the above
equation, i.e.,
\[
G(x) = 1 - \exp\{1 - x/e\} - e^{-1}x \exp\{1 - x/e\},\qquad x > e.
\]
We then have
\begin{eqnarray*}
G(2e) &=&  
1 - 3e^{-1} < 0,
\\
G(2.5e) &=& 
1 - 3.5 (e \sqrt{e})^{-1} 
> 1 - 3.5 \times (2.5 \times 1.5)^{-1} 
= {1 \over 15}>0,
\end{eqnarray*}
which lead to $F'(2e) < 0$ and $F'(2.5e) > 0$. Note here that $2e >
5.4$ and $2.5e < 7$. Therefore, the convexity of $F$ yields $F'(5) <
0$ and $F'(7) > 0$, which results in (\ref{eqn-argmin_F(x)}). \qed
\end{proof}

Proposition~\ref{prop-doubling-MCFTP} and
Theorem~\ref{thm-read-once-MCFTP} imply that the running time $T_{\rm
  D}$ of Doubling-MBD sampler is less than the running time $T_{\rm
  R}$ of Read-once-MBD sampler. However, Doubling-MBD sampler has to
store all the generated (uniform) random numbers until it outputs a
sample following the target distribution. On the other hand,
Read-once-MBD sampler is little memory-consuming because the sampler
uses, only one time, each of the generated random numbers.

We close this section by comparing our perfect samplers with the
inverse transform sampling (see, e.g., \citealt{Fish96}). The inverse
transform sampling for discrete target distributions is easy
implementable and takes the $O(N)$ running time in order to draw a
sample from the target distribution. Therefore, the inverse transform
sampling is less time-consuming than our perfect samplers.

To discuss this topic from a different perspective, we suppose that the
target distribution $\{\pi(i);i\in\bbZ_N\}$ is not normalized, in
other words, we have an unnormalized target distribution
$\{\wh{\pi}(i);i\in\bbZ_N\}$ such that
$C_{\pi}:=\sum_{i=0}^N\wh{\pi}(i) \neq 1$ and
\begin{equation}
\pi(i) = {1 \over C_{\pi}}\wh{\pi}(i),\qquad i \in \bbZ_N.
\label{eqn-wh{pi}(i)}
\end{equation}
It then follows from (\ref{defn-gamma(i)}) and (\ref{eqn-wh{pi}(i)}) that
\[
\gamma(i) = {\wh{\pi}(i) \over \wh{\pi}(i+1)},\qquad
i \in \bbZ_{N-1}.
\]
Therefore, our two perfect samplers still work well by using the
unnormalized target distribution $\{\wh{\pi}(i)\}$. On the other hand,
the inverse transform sampling has a problem in the present situation
because it needs the cumulative distribution
$\{\sigma(i);i\in\bbZ_N\}$, where $\sigma(i) = \sum_{\ell=0}^i
\pi(\ell)$ for $i\in\bbZ_N$. To obtain the cumulative distribution
$\{\sigma(i)\}$, we have to compute the normalizing constant $C_{\pi}$
by summing the unnormalized target distribution $\{\wh{\pi}(i)\}$ over
its support set $\bbZ_N$.

It should be note that the obtained constant $C_{\pi}$
includes, at worst, the $O(N)$ rounding error. Such rounding error can
be reduced to $O(\ln N)$ if $C_{\pi}$ is computed by pairwise
summation (see, e.g., \citealt{High93}). Furthermore, if $C_{\pi}$ is
computed by Kahan summation algorithm, then the rounding error can be
basically reduced to $O(1)$ but its computational complexity is four
times as much as that of {\it naive} summation (see, e.g.,
\citealt{High93}). Even though we take any of these options, we have to
store all the information of the cumulative distribution
$\{\sigma(i)\}$. Such memory consumption is not necessary for our two
perfect samplers.

As a result, although our MBD perfect samplers may not be particularly
superior in speed to other methods, they are easily implementable and
can draw samples {\it exactly} from unnormalized target
distributions. Especially, Read-one MBD sampler achieves such {\it
  exact} sampling with little memory consumption.

%%%%%%%%%%%%%%%%%%%%%%%%%%%%%%%%%%%%%%%%%%%%%%%%%%%%%%%%%%%%%%%%%%%%%%
\section*{Acknowledgments}
The author thanks Dr.~Shuji Kijima for helpful comments that motivated this work.
%%%%%%%%%%%%%%%%%%%%%%%%%%%%%%%%%%%%%%%%%%%%%%%%%%%%%%%%%%%%%%%%%%%%%%
%%%							Springer
%
% BibTeX users please use one of
%\bibliographystyle{spbasic}      % basic style, author-year citations
%\bibliographystyle{spmpsci}       % mathematics and physical sciences (QUESTA)
%\bibliographystyle{spphys}       % APS-like style for physics
%%%%%%%%%%%%%%%%%%%%%%%%%%%%%%%%%%%%%%%%%%%%%%%%%%%%%%%%%%%%%%%%%%%%%%
%%%							Elsevier
%
\bibliographystyle{elsarticle-harv} %(EJOR)
%\bibliographystyle{elsarticle-num-names}
%%%%%%%%%%%%%%%%%%%%%%%%%%%%%%%%%%%%%%%%%%%%%%%%%%%%%%%%%%%%%%%%%%%%%%
%%%							Wiley
%
%\bibliographystyle{wileyauy.bst} 
%\bibliographystyle{wileynum.bst} 
%%%%%%%%%%%%%%%%%%%%%%%%%%%%%%%%%%%%%%%%%%%%%%%%%%%%%%%%%%%%%%%%%%%%%%
%\bibliographystyle{acm.bst} 
%%%%%%%%%%%%%%%%%%%%%%%%%%%%%%%%%%%%%%%%%%%%%%%%%%%%%%%%%%%%%%%%%%%%%%
%\bibliography{}   % name your BibTeX data base
%%%%%%%%%%%%%%%%%%%%%%%%%%%%%%%%%%%%%%%%%%%%%%%%%%%%%%%%%%%%%%%%%%%%%%
%%%							Plain
%
% Non-BibTeX users please use
%\bibliographystyle{plain} % plain, alpha, abbrv, unsrt (arXiv)
%%%%%%%%%%%%%%%%%%%%%%%%%%%%%%%%%%%%%%%%%%%%%%%%%%%%%%%%%%%%%%%%%%%%%%
%\bibliography{hm-170325}

%%%%%%%%%%%%%%%%%%%%%%%%%%%%%%%%%%%%%%%%%%%%%%%%%%%%%%%%%%%%%%%%%%%%%%

\end{document}